\newtheorem{thm}{Theorem}[section]
\newtheorem{cor}[thm]{Corollary}
\newtheorem{pro}[thm]{Proposition}
\newtheorem{deff}[thm]{Definition}
\newtheorem{lem}[thm]{Lemma}
\newtheorem{rem}[thm]{Remark}
\newcommand{\nc}{\newcommand}
\nc{\cc}{\D{C}} \nc{\hh}{\D{H}} \nc{\nn}{\D{N}} \nc{\oo}{\D{O}}
\nc{\qq}{\D{Q}}
 \nc{\rr}{\D{R}}
\nc{\zz}{\D{Z}} \nc{\livre}{\ast}
\nc{\barr}{\begin{array}} \nc{\earr}{\end{array}}
\nc{\bthm}{\begin{thm}} \nc{\ethm}{\end{thm}}
\nc{\bpro}{\begin{pro}} \nc{\epro}{\end{pro}}
\nc{\blem}{\begin{lem}} \nc{\elem}{\end{lem}}
\nc{\bins}{\begin{ins}} \nc{\eins}{\end{ins}}
\nc{\bcor}{\begin{cor}} \nc{\ecor}{\end{cor}}
\nc{\brem}{\begin{rem}} \nc{\erem}{\end{rem}}
\nc{\bdeff}{\begin{deff}} \nc{\edeff}{\end{deff}}
\nc{\bea}{\begin{eqnarray}} \nc{\eea}{\end{eqnarray}}
\nc{\D}[1]{{\mathbb#1}}
\def\R{\rm I\kern -.2em R}
\def\N{\rm I\kern -.18em N}
\def\Z{\rm Z\kern -.332em Z}
\def\de{\rm [\kern -.15em [}
\def\dd{\rm ]\kern -.15em ]}
\def\||{\hspace{0.15cm}|\hspace{0.15cm}}
\newcommand{\C}{\cal{C}}
\def\dbigcup{\mathinner{\bigcup \mkern -13.2mu \rlap{\raise 0.6ex\hbox{.}}\mkern 14.9mu}}
\title{Limit Groups are Subgroup Conjugacy Separable}
\author{S. C. Chagas,
 \,\,\, P. A. Zalesskii
 \footnote{\vspace*{-.5cm} Both authors were supported by
CNPq.}}
\begin{document}






\maketitle

\begin{abstract}
 A group $G$ is called subgroup conjugacy separable
 if for
every pair of non-conjugate finitely generated subgroups  of $G$,
 there exists a finite quotient of $G$ where
the images of these subgroups are not conjugate.
  We prove
that limit groups are subgroup conjugacy separable. We also prove
this property for one relator groups of the form $R=\langle
a_1,...,a_n\mid W^n\rangle$  with $n>|W|$. The property is also proved for virtual retracts (equivalently for quasiconvex subgroups) of hyperbolic virtually special groups. 
\end{abstract}

\section{Introduction}

O. Bogopolski and F. Grunewald \cite{BG} recently introduced the
important notion of subgroup conjugacy separability for a group
$G$. A group $G$ is said to be subgroup conjugacy separable if for
every pair of non-conjugate finitely generated subgroups $H$ and
$K$ of $G$,
 there exists a finite quotient of $G$ where
the images of these subgroups are not conjugate. They proved that
free groups and the fundamental groups of finite trees of finite
groups subject to a certain normalizer condition, are subgroup
conjugacy separable.  For finitely generated virtually free groups the result was proved in  \cite{CZ-15}. Also, O. Bogopolski and K-U. Bux in \cite{BB}
proved that surface groups are conjugacy subgroup separable.

Surface groups belong to the class of limit groups, the object of
extensive study in the last few decades due to the fact that they
play a key role in the solution of the Tarski problem.


Our main result generalizes the result of Bogopolski and Bux.

\begin{thm}\label{limit}
 Let $G$ be a limit  group.
 Then $G$ is subgroup conjugacy separable.
\end{thm}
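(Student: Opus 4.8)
The plan is to recast subgroup conjugacy separability in profinite terms and then induct along the hierarchy of the limit group. Write $\widehat{G}$ for the profinite completion and, for a finitely generated subgroup $H\le G$, write $\overline{H}$ for its closure in $\widehat{G}$. Then the statement to be proved is equivalent to the assertion that whenever two finitely generated subgroups $H,K\le G$ satisfy $\overline{H}=\overline{K}^{\widehat g}$ for some $\widehat g\in\widehat{G}$, they are already conjugate in $G$. To license this reformulation I would first record the facts that make the profinite topology of a limit group well behaved: limit groups are subgroup separable, so that $\overline{H}\cong\widehat{H}$ and the full profinite topology of $H$ is induced from $G$, and they are conjugacy separable. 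I would also use that every finitely generated subgroup of a limit group is again a limit group and is relatively quasiconvex (limit groups being toral relatively hyperbolic), so the subgroups $H,K$ are themselves amenable to the same structural analysis as $G$.

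The structural input is the abelian (cyclic) JSJ hierarchy: a limit group is built from free groups, free abelian groups, and surface groups by iterated amalgams and HNN extensions over abelian edge groups, the inductive step presenting $G$ as $G=L\ast_{A}(A\times\mathbb{Z}^{m})$ (or the corresponding HNN extension) with $L$ of strictly smaller complexity and $A$ a maximal abelian subgroup of $L$. I would show this splitting is efficient in the profinite sense: vertex and edge groups are closed, $A$ is isolated, and $\widehat{G}$ is the profinite fundamental group of the finite graph of profinite groups obtained by completing the pieces, so that $\widehat{G}$ acts on a profinite tree $T$ whose vertex stabilizers are the conjugates of $\widehat{L}$ and $\overline{A\times\mathbb{Z}^{m}}$ and whose edge stabilizers are the conjugates of $\overline{A}$. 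This rests on limit groups being cohomologically good and on the separability and isolation of their abelian subgroups. The base cases of the induction are free groups, handled by Bogopolski and Grunewald, and surface groups, handled by Bogopolski and Bux, with free abelian groups being trivial.

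The induction then runs through profinite Bass–Serre theory. Since $H$ and $K$ are finitely generated, their actions on the abstract Bass–Serre tree of $G$ yield finite graph-of-groups decompositions whose vertex groups are intersections of $H$ (respectively $K$) with conjugates of $L$ and of $A\times\mathbb{Z}^{m}$, and whose edge groups are intersections with conjugates of $A$. A conjugacy $\overline{H}=\overline{K}^{\widehat g}$ carries the minimal $\overline{K}$-invariant subtree of $T$ onto the minimal $\overline{H}$-invariant subtree; comparing these, and exploiting that the splitting is acylindrical because maximal abelian subgroups are malnormal in the relevant sense, I would match the two graph-of-subgroups data vertex by vertex. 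At each vertex the problem reduces either to a conjugacy statement inside a conjugate of $\widehat{L}$, dispatched by the induction hypothesis together with conjugacy separability, or to one inside the free abelian group $\overline{A\times\mathbb{Z}^{m}}$, where it is immediate; the edge matchings are controlled by double-coset separability of $A$ in the adjacent vertex groups. Reassembling these local conjugacies into a single element of $G$ completes the inductive step.

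The main obstacle I anticipate lies in the abelian part of the splitting and in descending the profinite conjugator to $G$. Two finitely generated subgroups can meet every conjugate of $L$ and of $A\times\mathbb{Z}^{m}$ in pairwise conjugate subgroups and yet fail to be conjugate, because the combinatorial pattern of identifications around the tree need not be realized by one group element; the crux is to prove that a matching witnessed only in $\widehat{G}$ is forced to descend to $G$. This is precisely where acylindricity, the isolation and separability of the abelian edge groups, and the conjugacy distinguishedness supplied inductively must be combined. The sharpest point is the interaction of the free abelian vertex groups with the edge group $\overline{A}$: this amounts to a compatibility condition over $\widehat{\mathbb{Z}}$ that one must show comes from $\mathbb{Z}$, and establishing that descent is, I expect, the technical heart of the argument.
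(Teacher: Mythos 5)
Your outline is a plan rather than a proof, and the step you yourself flag as ``the technical heart'' --- descending a conjugacy witnessed only in $\widehat G$ back to $G$ by matching graph-of-groups data along the hierarchy --- is precisely the part that is not established and that cannot be waved through. Profinite Bass--Serre theory is much weaker than the classical theory: a closed subgroup of $\widehat G$ acting on the profinite tree $T$ need not admit a usable graph-of-groups decomposition, minimal invariant subtrees do not behave combinatorially, and even granting a vertex-by-vertex matching of $\overline H$ and $\overline K$, reassembling local conjugacies into a single element of $G$ is exactly the combination problem that makes conjugacy separability of limit groups already delicate; for the subgroup version you would in addition need a strengthened inductive hypothesis (into-conjugacy distinguishedness of subgroups of the vertex groups, not mere subgroup conjugacy separability), which your induction does not set up. The interaction of the free abelian vertex groups with $\overline A$, which you correctly identify as a $\widehat{\mathbb Z}$-versus-$\mathbb Z$ compatibility issue, is left entirely open.

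The paper avoids all of this. Its proof is short and non-inductive: by Wilton's theorem (Hall's theorem for limit groups) every finitely generated subgroup $H$ of a limit group $G$ is a \emph{virtual retract} of $G$, and $G$ is hereditarily conjugacy separable by the authors' earlier work. Given $\overline K^{\gamma}\leq\overline H$, one uses the retraction $f\colon U\to H$ (and its continuous extension $\widehat f$) to replace $\gamma$ by $\widehat f(\gamma)^{-1}\gamma$, which centralizes a chosen element $k\in K$; if $K$ is non-abelian one picks $k$ with cyclic centralizer (which exists by commutative transitivity and the structure of iterated centralizer extensions), and Minasyan's density result $C_U(k)$ dense in $C_{\widehat U}(k)$ lets one push $\gamma$ into $\overline{\langle k\rangle}$, forcing $K\leq\overline H\cap G=H$; the abelian case is handled directly by commutative transitivity. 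If you want to salvage your approach, you would essentially have to reprove the descent via these same retraction and centralizer arguments, so the hierarchy induction buys nothing here.
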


 Bogopolski and Grunewald in their paper used also a notion of into conjugacy separability.  A
 subgroup $H$ of a group $G$ is called into conjugacy separable if for every finitely
 generated subgroup $K$ not conjugate into $H$ there exists a finite quotient
 of $G$ where the image of $K$ is not conjugate into the image of $H$. In this paper we do not need to ask for
 $K$ to be finitely generated. So changing slightly the definition of Bogopolski-Grunewald we say that
 a subgroup $H$ of a group $G$ is  into conjugacy distinguished if for  subgroup $K$ not conjugate into $H$ there
exists a finite quotient of $G$ where the image of $K$ is not
conjugate into the image of $H$. In terms of the profinite
completion it reads as follows: $H$ is into conjugacy
distinguished if every subgroup $K$ of $G$, the closure
$\overline K$ is conjugate  into $\overline H$ in $\widehat G$ if and only if $K$ is
conjugate into $H$ in $G$. We show in the paper that every
finitely generated subgroup of a limit group is into conjugacy
distinguished.

The methods of the proof are based on the paper \cite{RZ-15} of
 Ribes an the second author on groups whose finitely generated
subgroup are conjugacy distinguished. In particular, we use ideas
of  Section 3 from that paper, where the virtual retract property
plays a crucial role. 

This allows to extend Theorem \ref{limit} to virtual retracts of hyperbolic groups with conjugacy separable finite index subgroups.

\begin{thm} \label{virtualretract} Let $G$ be a hyperbolic  group such that every finite index subgroup of $G$ is conjugacy separable and let $H$
be a virtual retract of $G$. Then $H$ is into conjugacy
distinguished. In particular $G$ is a virtual retracts subgroup  conjugacy separable. \end{thm}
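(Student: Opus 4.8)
The plan is to work entirely in the profinite completion and to use the reformulation recorded in the introduction: since conjugacy in $G$ survives in $\widehat G$, it suffices to prove the reverse implication, namely that if a subgroup $K\le G$ satisfies $\overline K^{\,\bar g}\subseteq\overline H$ for some $\bar g\in\widehat G$, then $K^{h}\subseteq H$ for some $h\in G$. First I would record the profinite consequences of the virtual retract hypothesis. Fix a finite index subgroup $L$ with $H\le L$ and a retraction $\rho\colon L\to H$. Because $[G:L]<\infty$, the closure $\widehat L$ is open in $\widehat G$, and $\rho$ extends to a continuous retraction $\hat\rho\colon\widehat L\to\overline H$. This yields the three facts I will use repeatedly: $\overline H=\widehat H$, the subgroup $H$ is closed in the profinite topology of $G$ (so $\overline H\cap G=H$), and the profinite topology of $G$ restricts to the full profinite topology on $H$.

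Second, I would reduce the statement about the (possibly infinitely generated) subgroup $K$ to a statement about genuine conjugators in $G$; the conjugating element $\bar g$ lies in $\widehat G$ rather than in $G$, and replacing it by an element of $G$ is the heart of the matter. Using that $\widehat L$ is open and that $\overline H\subseteq\widehat L$, I would decompose $\widehat G$ into finitely many double cosets $\overline H\,\bar t_i\,\widehat L$ with $\bar t_i\in G$, and, after left-multiplying $\bar g$ by an element of $\overline H$ (which preserves the inclusion $\overline K^{\,\bar g}\subseteq\overline H$), arrange that $\bar g$ differs from an element of $\widehat L$ by one of the representatives $\bar t_i\in G$. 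Applying $\hat\rho$ then transports the inclusion into $\overline H=\widehat H$, where the hypothesis that the finite index subgroup $L$ is conjugacy separable, so its conjugacy classes are profinitely closed, lets me locate an honest conjugator. Here hyperbolicity enters: a virtual retract of a hyperbolic group is quasiconvex, so the intersections $H^{g}\cap H$ are controlled and the set $\{\,g\in G:K^{g}\subseteq H\,\}$ is a finite union of cosets; this finiteness is what makes the obstruction detectable in a single finite quotient.

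The main obstacle is precisely this descent: guaranteeing that the set of potential conjugators is closed in the profinite topology of $G$, so that its nonemptiness in $\widehat G$ forces nonemptiness in $G$. I expect to handle it exactly along the lines of Section~3 of \cite{RZ-15}, where the virtual retract property is used to reduce conjugacy-into questions to ordinary element conjugacy, and then hereditary conjugacy separability (every finite index subgroup of $G$ is conjugacy separable) separates the finitely many obstruction classes produced by quasiconvexity. Assembling these steps shows that $\overline K^{\,\bar g}\subseteq\overline H$ implies $K^{h}\subseteq H$ for some $h\in G$, that is, $H$ is into conjugacy distinguished.

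Finally, the subgroup conjugacy separability of $G$ on virtual retracts follows by symmetry. Given non-conjugate virtual retracts $H$ and $K$ (both finitely generated, being retracts of finite index subgroups), at least one is not conjugate into the other: if we had both $K^{g}\subseteq H$ and $H^{g'}\subseteq K$, then $K^{gg'}\subseteq K$, and since a finitely generated quasiconvex subgroup of a hyperbolic group satisfies $K^{x}\subseteq K\Rightarrow K^{x}=K$, these mutual inclusions would force $H$ and $K$ to be conjugate, a contradiction. Applying the into conjugacy distinguished property to whichever of $H$, $K$ does not receive the other then produces a finite quotient in which the images of $H$ and $K$ are not conjugate.
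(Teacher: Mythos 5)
Your outline assembles some of the right ingredients (the retraction $\rho$, the openness of $\overline L$, conjugacy separability of the finite index subgroup), but the actual descent mechanism is missing, and the step where you invoke hyperbolicity is not the one the argument needs. Your key claim is that quasiconvexity of $H$ makes the set $\{g\in G: K^g\subseteq H\}$ a finite union of cosets and hence profinitely closed; but $K$ here is an \emph{arbitrary} subgroup (the paper's definition of into conjugacy distinguished deliberately drops finite generation of $K$), and for such $K$ no finiteness of this kind is available. Nor does applying $\hat\rho$ ``transport the inclusion'' as you assert: $\hat\rho(\overline K)$ equals $\overline{\rho(K)}$, not $\overline K$, unless $K$ already sits inside $H$. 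The point you defer to ``along the lines of Section 3 of \cite{RZ-15}'' is precisely the heart of the proof, and it runs differently (Lemma \ref{achieve} of the paper): pick $k\in K$; use that the retract $H$ is conjugacy distinguished in $U$ to conjugate $k$ into $H$; apply the extended retraction $\hat f$ to the single element $k^\gamma\in\overline H$ to get $k^\gamma=k^{\hat f(\gamma)}$, so that after replacing $\gamma$ by $\hat f(\gamma)^{-1}\gamma$ the conjugator \emph{centralizes} $k$; then invoke Minasyan's Proposition 3.2 from \cite{M} (hereditary conjugacy separability gives that $C_U(k)$ is dense in $C_{\widehat U}(k)$) together with the fact --- and this is where hyperbolicity actually enters, not via quasiconvexity --- that $C_G(k)$ is virtually cyclic for $k$ of infinite order, so $\langle k\rangle$ has finite index in $C_U(k)$ and $C_{\widehat U}(k)=\widehat{\langle k\rangle}\,C_U(k)$. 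A genuine conjugation by an element of $C_U(k)$ then puts $\gamma$ into $\widehat{\langle k\rangle}\subseteq\overline H$, whence $K\leq\overline H\cap G=H$. Without this centralizer argument your descent step has no proof.

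A smaller point: your symmetry argument for the ``in particular'' clause, resting on $K^x\subseteq K\Rightarrow K^x=K$ for quasiconvex subgroups, is a workable but unnecessary detour. The paper gets the conclusion directly from the ``Moreover'' clause of Corollary \ref{virtually}: once $K\leq H$ and $\overline K^\gamma=\overline H$ with $\gamma\in\widehat{\langle k\rangle}\subseteq\overline K$, one has $\overline K=\overline H$, and since a retract of a finite index subgroup is closed, $K=\overline K\cap G=\overline H\cap G=H$.
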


A group $G$ is called virtually  special if there exists a special
compact cube complex X having a finite index subgroup of $G$ as
its fundamental group (see \cite{W} for definition of special cube
complex). Virtually special groups own its importance to  Daniel
Wise who proved in \cite{W} that 1-relator groups with torsion are
virtually special, answering positively a question of Gilbert
Baumslag who asked in \cite{B-67} whether this groups are
residually finite. In fact, many  groups of geometric origin are
virtually special: the fundamental group of a hyperbolic
3-manifold (Agol \cite{A-13}), small cancellation groups
(a combination of \cite{W} and \cite{A-13}) and hyperbolic Coxeter
groups (Haglund and Wise \cite{HW-2010}) are virtually
 special.

 Moreover,  Haglung and Wise in \cite{HW-2008} showed that quasiconvex subgroups of 
a virtually  special hyperbolic group $G$ (i.e.,
 a subgroups that represents  a quasiconvex subset in the  set of vertices of the Cayley graph of
$G$) are virtual retracts of $G$. Thus the next theorem applies in
particular to this important class of subgroups.

\begin{thm} \label{quasiconvex} Let $G$ be a hyperbolic virtually special group and let $H$
be a quasiconvex subgroup of $G$. Then $H$ is into conjugacy
distinguished. In particular $G$ is quasiconvex subgroup conjugacy separable. \end{thm}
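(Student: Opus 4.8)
The plan is to derive Theorem \ref{quasiconvex} from Theorem \ref{virtualretract}. Two hypotheses of the latter must be verified for the group $G$ at hand: that the subgroup $H$ is a virtual retract of $G$, and that every finite index subgroup of $G$ is conjugacy separable. Once both are in place, Theorem \ref{virtualretract} applies verbatim and yields that $H$ is into conjugacy distinguished; the ``in particular'' assertion then follows, since $H$ was an arbitrary quasiconvex subgroup, which by \cite{HW-2008} is the same as an arbitrary virtual retract subgroup.

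The first hypothesis is immediate from the work of Haglund and Wise quoted in the introduction: since $G$ is a hyperbolic virtually special group and $H$ is quasiconvex in $G$, the result of \cite{HW-2008} guarantees that $H$ is a virtual retract of $G$. No further argument is needed here.

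The second hypothesis is the heart of the matter. First I would record that the class of hyperbolic virtually special groups is closed under passage to finite index subgroups: a finite index subgroup of a hyperbolic group is again hyperbolic, and a finite index subgroup $L \le G$ is again virtually special, since $L$ shares a common finite index subgroup with $G$, and a finite index subgroup of the fundamental group of a compact special cube complex is the fundamental group of a finite (hence again compact special) cover. Consequently it suffices to know that \emph{every} hyperbolic virtually special group is conjugacy separable. I would therefore invoke the theorem of Minasyan and Zalesskii that a virtually compact special hyperbolic group is conjugacy separable, and apply it separately to $G$ and to each of its finite index subgroups $L$, each of which is itself hyperbolic virtually special. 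This yields precisely the second hypothesis.

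The main obstacle is exactly this conjugacy separability input, together with a logical subtlety that must be handled with care. Conjugacy separability is not in general inherited by finite index subgroups, so one cannot deduce conjugacy separability of a finite index $L \le G$ from that of $G$ alone; instead one must observe that $L$ itself belongs to the class of hyperbolic virtually special groups and apply the conjugacy separability theorem to $L$ on its own. With both hypotheses of Theorem \ref{virtualretract} thereby verified, an appeal to that theorem completes the proof.
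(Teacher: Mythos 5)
Your proof is correct and follows essentially the same route as the paper: quasiconvex subgroups are virtual retracts by Haglund--Wise, and the conjugacy separability hypothesis comes from Minasyan--Zalesskii, after which Theorem \ref{virtualretract} applies. The only (harmless) difference is that the paper cites Theorem 1.1 of \cite{MZ-15} as already giving \emph{hereditary} conjugacy separability, whereas you re-derive the hereditary version by observing that the class of hyperbolic virtually special groups is closed under passage to finite index subgroups --- a correct and slightly more self-contained way to handle the same point.
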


As an application of it we obtain

\begin{thm} Let $R=\langle a_1,...,a_n\mid W^n\rangle$ be a one relator group
with $n>|W|$. Then every finitely generated subgroup $H$ of $R$ is
into conjugacy distinguished and $R$ is subgroup conjugacy
separable.\end{thm}

After this paper  was submitted Bogopolski and Bux put  the paper \cite{BB} into arxiv, where they gave independent prove of Theorems \ref{virtualretract} and \ref{quasiconvex} in \cite[Lemma 6.2  and Corollary C]{BB} for torsion free groups. Our methods allow us to avoid the assumption of torsion freeness. In particular, the case  of small cancelation groups groups with finite $C'(1/6)$  or $C '(1/4) - T(4)$ presentations is covered by our results.

We finish the paper showing that a direct product of two free
groups is not subgroup conjugacy separable (see Section 3).

\begin{pro} A direct product $F_2\times F_2$ of two free groups of
rank $2$ is not subgroup conjugacy separable.\end{pro}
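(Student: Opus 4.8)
The plan is to exhibit two finitely generated subgroups of $G = F \times F$ (with $F = F_2$) having the same closure in the profinite completion $\widehat{G} \cong \widehat{F} \times \widehat{F}$ --- hence identical, and in particular conjugate, images in every finite quotient of $G$ --- but which are not conjugate in $G$ itself. The source of the phenomenon is the failure of residual finiteness of a suitable quotient of $F$.

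First I would fix a finitely presented quotient $\pi\colon F \twoheadrightarrow Q$ that is not residually finite, for instance the Baumslag--Solitar group $Q = \langle a,b \mid ba^2b^{-1} = a^3\rangle$, which is $2$-generated, one-relator, and non-Hopfian, hence not residually finite. Write $N = \ker \pi$ and let $R = \ker(Q \to \widehat{Q}) \ne 1$ be its nontrivial finite residual. Consider the fibre product
$$P = \{(u,v) \in F \times F : \pi(u) = \pi(v)\} = \{(u,v) : u^{-1}v \in N\}.$$
A short computation shows $P = \langle (a,a),(b,b),(1,r)\rangle$, where $r = ba^2b^{-1}a^{-3}$ is the defining relator of $Q$: indeed $P = \operatorname{diag}(F)\cdot(1\times N)$, and since $(x,x)(1,r)(x,x)^{-1}=(1,xrx^{-1})$ the diagonal conjugates $(1,r)$ to a normal generating set of $1\times N$. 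Thus $P$ is finitely generated, needing only the finite presentability of $Q$ and not the full fibre-product machinery.

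Next I would enlarge $P$. Using $\widehat{F}/\overline{N} \cong \widehat{Q}$ one gets $\overline{N}\cap F = \pi^{-1}(R) \supsetneq N$; I would pick $q_0 \in R\setminus\{1\}$ and a pair $g = (u_0,v_0)$ with $\pi(u_0)^{-1}\pi(v_0) = q_0$, so that $u_0^{-1}v_0 \in (\overline{N}\cap F)\setminus N$, and set $P' = \langle P, g\rangle$, again finitely generated. Identifying $\overline{P} = \{(\hat u,\hat v) : \hat u^{-1}\hat v \in \overline{N}\}$ --- the right side is closed and contains the dense set $P$, while every such pair is a limit of elements $(u_i,u_in_i)\in P$ --- one sees $g \in \overline{P}$, whence $P \subseteq P' \subseteq \overline{P}\cap G$ and therefore $\overline{P'} = \overline{P}$. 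Consequently $P$ and $P'$ have the \emph{same} image in every finite quotient of $G$, so their images are trivially conjugate there, while $g\notin P$ gives $P'\ne P$.

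It remains to prove that $P$ and $P'$ are not conjugate in $G$, which is the crux. I would project along $\pi\times\pi\colon G \to Q\times Q$. The image of $P$ is $\operatorname{diag}(Q)$, so the image of any conjugate $gPg^{-1}$ is a twisted diagonal $\{(s q s^{-1}, t q t^{-1}) : q\in Q\}\cong Q$, which meets $1\times Q$ trivially. By contrast the image of $P'$ is $\{(s,t) : s^{-1}t \in \langle\!\langle q_0\rangle\!\rangle_Q\}$, meeting $1\times Q$ in $1\times\langle\!\langle q_0\rangle\!\rangle_Q \ne 1$. Since $N\times N \le P \le P'$ with $N\times N$ normal in $G$, each of $P$, $P'$, $gPg^{-1}$ corresponds bijectively to its image under the subgroup correspondence for $G \to Q\times Q$; as these images differ, $gPg^{-1}\ne P'$ for every $g\in G$. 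Thus $P$ and $P'$ witness the failure of subgroup conjugacy separability. The hard part is exactly this last step: one must locate a conjugacy invariant surviving the projection to $Q\times Q$, and intersection with the second free factor supplies it.
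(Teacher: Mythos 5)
Your proof is correct, but it departs from the paper's argument at the decisive step, so a comparison is worth recording. Both proofs start from the Mihailova fibre product over a finitely presented, non--residually finite quotient of a free group, but the witnesses and the non-conjugacy certificate differ. The paper takes Higman's $4$-generated group $H$, which has \emph{no} nontrivial finite quotients; consequently the fibre-product subgroup $L_H\leq F_4\times F_4$ is dense in the profinite topology, and the non-conjugate pair is $L_H$ together with all of $F_4\times F_4$ (viewed inside $F_2\times F_2$ via a finite-index embedding). Non-conjugacy is then obtained from non-isomorphism: the centralizer of a suitable element of $L_H$ has an infinitely generated direct factor $L_H\cap(F_4\times\{1\})$, while all centralizers in $F_4\times F_4$ are finitely generated. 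You instead use a $2$-generated non-residually finite quotient such as $BS(2,3)$, which lets you work directly in $F_2\times F_2$ with no passage to a finite-index overgroup; since your $Q$ has many finite quotients, $\overline P$ is a proper closed subgroup and you must manufacture the second witness $P'=\langle P,g\rangle$ with $g\in(\overline P\cap G)\setminus P$, verifying $\overline{P'}=\overline P$ by the explicit description $\overline P=\{(\hat u,\hat v):\hat u^{-1}\hat v\in\overline N\}$. Your non-conjugacy certificate is a genuine conjugacy invariant rather than an isomorphism invariant: all candidates contain $N\times N=\ker(\pi\times\pi)$, so the lattice correspondence reduces the question to $Q\times Q$, where the image of any conjugate of $P$ is a twisted diagonal meeting $1\times Q$ trivially, whereas the image of $P'$ meets it in $1\times\langle\!\langle q_0\rangle\!\rangle_Q\neq 1$. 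All steps check out (finite generation of $P$ and $P'$, density of $P$ in the displayed closed set, $\overline N\cap F=\pi^{-1}(R)$ with $R\neq 1$ by non-Hopficity). What each approach buys: the paper's choice of Higman's group makes density trivial and the invariant (finite generation of centralizers) very clean, at the cost of the detour through $F_4\times F_4$; your version is more elementary in its ingredients, stays inside $F_2\times F_2$ from the start, works for any finitely presented non--residually finite $2$-generated quotient, and isolates exactly where separability fails, namely between a fibre product and its closure in the ambient discrete group.
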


\bigskip
{\it Acknowledgements:} We thank Ashot Minasyan for conversations
on the subject.

\section{Proofs}

A subgroup $H$ of a group $G$ is called a virtual retract if $H$
is a semidirect factor (retract) of some finite index subgroup of $G$.  A group $G$ is called hereditarily conjugacy separable if every finite index subgroup of $G$ is conjugacy separable.

\begin{lem}  \label{achieve} Let $G$ be a group, $U$ a finite index conjugacy separable subgroup of $G$ and $H$ is a rectract of $U$.  Let $K$ be a subgroup of $G$ such that $\overline K^\gamma\leq \overline H$ for some $\gamma\in \widehat G$  and  $k$ be an element of $K$. Then there are $G$-conjugates $H'$ and $K'$ of $H$ and $K$  respectively, such that $(K')^{\gamma'}\leq \overline H'$ for some $\gamma'\in C_{\overline U}(k)$. Moreover, if  $C_G(k)$ is virtually cyclic and $U$ is hereditarily conjugacy separable, then the $\gamma'$  can be achieved to be in $\overline{\langle k\rangle}$. \end{lem}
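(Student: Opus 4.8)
The plan is to first absorb the part of $\gamma$ lying outside $\overline U$ into a genuine $G$-conjugation, then to exploit the retraction $\rho\colon U\to H$ together with conjugacy separability of $U$ to normalise the situation so that $k$ lands inside $H$, and finally to use the retraction a second time to peel off an $\overline H$-factor from $\gamma$ and thereby land in the centralizer. Concretely, since $\overline U$ is open of finite index in $\widehat G$ and $G$ is dense, I would write $\widehat G=\bigsqcup_i g_i\overline U$ with $g_i\in G$, so that $\gamma=g_i\bar u$ with $\bar u\in\overline U$. From $k^\gamma\in\overline K^\gamma\le\overline H\le\overline U$ one gets $(k^{g_i})^{\bar u}\in\overline U$, and since $\bar u\in\overline U$ this forces $k^{g_i}\in\overline U\cap G=U$. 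Replacing $K$ by the $G$-conjugate $K^{g_i}$ and $k$ by $k^{g_i}\in U$, we may assume from now on that $\gamma=\bar u\in\overline U$, that $k\in K\cap U$, and that $\overline K^{\gamma}\le\overline H$; here I use the identification $\overline U=\widehat U$ (valid since $U$ has finite index) and the fact that the retract $H$ is closed in $U$ with $\overline H=\widehat H$.

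Let $\rho\colon U\to H$ be the retraction and $\hat\rho\colon\overline U\to\overline H$ its continuous extension, which restricts to the identity on $\overline H$. Applying $\hat\rho$ to $k^{\gamma}\in\overline H$ gives $k^{\gamma}=\hat\rho(k)^{\hat\rho(\gamma)}=\rho(k)^{\hat\rho(\gamma)}$, so $k$ and $\rho(k)$ are conjugate in $\overline U=\widehat U$; as both lie in $U$, conjugacy separability of $U$ yields $w\in U$ with $\rho(k)=k^{w}$. Now I replace $K$ by $K^{w}$ and $k$ by $k':=k^{w}=\rho(k)\in H$ (keeping $H$ fixed and updating $\gamma$ to $\gamma_1:=w^{-1}\gamma\in\overline U$), so that $\overline{K^{w}}^{\,\gamma_1}\le\overline H$ with $k'\in H\cap K^{w}$. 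Applying $\hat\rho$ once more to $(k')^{\gamma_1}\in\overline H$ and using $\hat\rho(k')=k'$ gives $(k')^{\gamma_1}=(k')^{\hat\rho(\gamma_1)}$; hence, writing $\delta:=\hat\rho(\gamma_1)\in\overline H$ and $c:=\gamma_1\delta^{-1}$, we obtain $c\in C_{\overline U}(k')$ and $\gamma_1=c\delta$. Since $\delta\in\overline H$ normalises $\overline H$, the inclusion $\overline{K^{w}}^{\,\gamma_1}\le\overline H$ becomes $\overline{K^{w}}^{\,c}\le\overline H$. Thus $H'=H$, $K'=K^{g_iw}$ and $\gamma'=c\in C_{\overline U}(k')$ satisfy the first assertion, with $k'$ the corresponding $G$-conjugate of $k$.

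For the "moreover" part I would invoke the property of hereditarily conjugacy separable groups that element centralizers are separable, namely $C_{\overline U}(k')=\overline{C_U(k')}$. In the torsion-free applications $k'$ has infinite order, so, as $C_G(k')\cong C_G(k)$ is virtually cyclic and $C_U(k')=C_G(k')\cap U$ is virtually cyclic as well, the subgroup $\langle k'\rangle$ has finite index in $C_U(k')$. Consequently $\overline{\langle k'\rangle}$ is open in $\overline{C_U(k')}=C_{\overline U}(k')$, with coset representatives in $C_U(k')\le U$. Writing $c=e\,d$ with $e\in\overline{\langle k'\rangle}$ and $d\in C_U(k')$, the inclusion $\overline{K^{w}}^{\,c}\le\overline H$ turns into $\overline{K^{w}}^{\,e}\le\overline H^{d^{-1}}=\overline{H^{d^{-1}}}$; replacing $H'$ by the $U$-conjugate $H^{d^{-1}}$ then yields the desired $\gamma'=e\in\overline{\langle k'\rangle}$.

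The main obstacle is the centralizer-separability identity $C_{\overline U}(k')=\overline{C_U(k')}$: the inclusion $\supseteq$ is automatic by continuity of the commutator map, but the reverse inclusion is precisely where hereditary conjugacy separability is required, and it is the technical heart of the argument. The virtual cyclicity of $C_G(k')$ enters only afterwards, to reduce from the full procyclic-by-finite centralizer to the subgroup $\overline{\langle k'\rangle}$; the bookkeeping of which conjugate of $k$ the centralizer refers to is routine once the two applications of $\hat\rho$ are set up as above.
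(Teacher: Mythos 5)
Your proof is correct and follows essentially the same route as the paper's: reduce to $\gamma\in\overline U$ using $G\widehat U=\widehat G$, use the extended retraction $\hat\rho$ to split $\gamma$ into a factor centralizing $k$ times an element of $\overline H$ that can be absorbed into $\overline H$, and invoke the density of $C_U(k)$ in $C_{\widehat U}(k)$ for hereditarily conjugacy separable $U$ (Proposition 3.2 of \cite{M}) for the ``moreover'' clause. The only real difference is that you derive the intermediate step --- that $k$ is $U$-conjugate into $H$ --- directly from conjugacy separability of $U$ via $\hat\rho$, where the paper cites Proposition 7 of \cite{RZ-15}; both arguments also tacitly use that $k$ has infinite order so that $\langle k\rangle$ has finite index in the virtually cyclic $C_U(k)$, a point you at least flag explicitly.
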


\begin{proof}  We shall replace $H,K$ by their conjugates in $G$ and change $\gamma$ correspondingly until we achieve  the statement of the lemma holding for them; thus the final $H$ and $K$ will be our $H'$ and $L'$ of the statement. 

 Note first that $G$ is residually finite, since $U$ is, so we can regard $G$ as a dense subgroup of $\widehat G$ . Since $G\widehat{U}= \widehat{G}$, replacing $K$ by some
conjugate in $G$ we may assume that $\gamma$ belongs to
$\widehat{U}$ and $K$ is contained in $U$, since $\overline{H}\leq
\overline{U}$ and $U$ is closed in the profinite topology. 
By Proposition 7 in \cite{RZ-15} $H$ is conjugacy
distinguished, therefore $k$ is conjugated to an element of $H$ in
$U$. Hence, we may assume that $k$ belongs to $H$.

Let $f: U \rightarrow H$ be the epimorphism with the restriction
to $H$ being the identity map and $\hat{f}:
\widehat{U}\rightarrow \overline{H}$ be the continuous extension
of it.

We have, $k^{\gamma}\in \overline{H}$, so $k^{\gamma}=
\widehat{f}(k^{\gamma})=f(k)^{\widehat{f}(\gamma)}=k^{\widehat{f}(\gamma)}\in
\widehat{f}(\overline{H})= \overline{H}$. Hence,
$\widehat{f}(\gamma)^{-1}\gamma\in C_{\widehat{G}}(k)$.

Replacing $\gamma$ by $\widehat{f}(\gamma)^{-1}\gamma$ we achieve
 that $\gamma$ centralizes $k$.

Note however that if $C_G(k)$ is virtually cyclic, the group generated by
$k$ has finite index in $C_U(k)$, and since $U$ is hereditarily conjugacy separable by Proposition 3.2
in \cite{M}  $C_{U}(k)$ is dense in $C_{\widehat{U}}(k)$. Hence,

$$\widehat{\langle h \rangle} C_{U}(k)= C_{\widehat{U}}(k).$$

It means that conjugating $K$ by an element of $C_U(k)$, we may
assume that $\gamma\in \widehat{\langle k \rangle}$.
\end{proof}

 Our
main tool is the following proposition whose proof uses
essentially Proposition 7 in \cite{RZ-15}.

\begin{pro}\label{general} Let $G$  be a hereditarily conjugacy separable group and  $H$ be a virtual retract of $G$.  Let
$K$ be a subgroup of $G$ having an element $h$ such that $C_G(h)$
is virtually cyclic.  Then $\overline K$ is conjugate  into $\overline H$ in
$\widehat G$ if and only if $K$ is conjugate  into $H$ in $G$.
 Moreover, if $K$ is closed then $\overline K$ is conjugate to
$\overline H$ in $\widehat G$ if and only if $K$ is conjugate  to
$H$ in $G$.\end{pro}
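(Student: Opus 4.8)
The plan is to establish the substantive reverse implication, the forward one being immediate: if $K^g\le H$ with $g\in G\subseteq\widehat G$, then $\overline K^{\,g}=\overline{K^g}\le\overline H$, so $\overline K$ is conjugate into $\overline H$, and equalities pass to closures in the same way. So assume $\overline K^{\,\gamma}\le\overline H$ for some $\gamma\in\widehat G$. First I would unpack the hypotheses: fix a finite-index subgroup $U\le G$ of which $H$ is a retract; since $G$ is hereditarily conjugacy separable, so is $U$, which places us exactly in the situation of Lemma~\ref{achieve}, taking as distinguished element $k=h$, whose centralizer $C_G(h)$ is virtually cyclic. I would also record the standard fact that a virtual retract is closed in the profinite topology, so that $\overline H\cap G=H$ (and, when $K$ is closed, $\overline K\cap G=K$).

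The core of the argument is a reduction of the conjugator into $\overline H$. Applying Lemma~\ref{achieve} with $k=h$ (its ``moreover'' clause being available precisely because $C_G(h)$ is virtually cyclic and $U$ is hereditarily conjugacy separable), after replacing $H$ and $K$ by suitable $G$-conjugates I may assume $h\in H$ and that the conjugator lies in $\overline{\langle h\rangle}$. This is the step I expect to be the main obstacle: it is where both the virtually-cyclic-centralizer hypothesis and the conjugacy separability are consumed, through the density of $C_U(h)$ in $C_{\widehat U}(h)$. Since $h\in H$ gives $\overline{\langle h\rangle}\le\overline H$, I now have the crucial fact $\gamma\in\overline H$.

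With $\gamma\in\overline H$ the remainder of the ``into'' statement is formal. Conjugating $\overline K^{\,\gamma}\le\overline H$ by $\gamma^{-1}$ and using $\overline H^{\,\gamma^{-1}}=\overline H$ (valid because $\gamma^{-1}\in\overline H$) yields $\overline K\le\overline H$; intersecting with $G$ and invoking $\overline H\cap G=H$ gives $K\le H$. Undoing the $G$-conjugations then shows the original $K$ is conjugate into the original $H$ in $G$, as required.

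For the ``moreover'' clause, suppose $K$ is closed and $\overline K^{\,\gamma}=\overline H$. The part just proved supplies $a\in G$ with $K^a\le H$, hence $\overline K^{\,a}\le\overline H=\overline K^{\,\gamma}$ and so $\overline K^{\,a\gamma^{-1}}\le\overline K$. Here I would use the elementary profinite fact that a closed subgroup carried into itself is carried onto itself (compare $\overline K N$ with $\overline K^{\,a\gamma^{-1}}N$ in each finite quotient $G/N$, $N$ open normal, where the two finite groups are conjugate and one contains the other), giving $\overline K^{\,a\gamma^{-1}}=\overline K$ and therefore $\overline{K^a}=\overline K^{\,a}=\overline H$. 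Finally, $K$ closed makes $K^a$ closed, so $K^a=\overline{K^a}\cap G=\overline H\cap G=H$, i.e. $K$ is conjugate to $H$ in $G$.
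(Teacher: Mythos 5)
Your proof is correct and follows essentially the same route as the paper: reduce via Lemma~\ref{achieve} to a conjugator $\gamma\in\overline{\langle h\rangle}\le\overline H$, then use closedness of the virtual retract $H$ to get $K\le\overline H\cap G=H$. The only (harmless) divergence is in the ``moreover'' clause, where you invoke the fact that a closed subgroup of a profinite group conjugated into itself equals itself, whereas the paper simply notes that $\gamma\in\overline{\langle h\rangle}\le\overline K$ forces $\overline K^{\,\gamma}=\overline K=\overline H$ directly.
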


\begin{proof} Suppose $\overline K^{\gamma}\leq \overline H$, where
$\gamma\in \widehat{G}$.

By hypothesis $H$ is a virtual retract of $G$. So there exist a
finite index subgroup $U$ of $G$ such that $H$ is a retract of
$U$.  Then by Lemma \ref{achieve} we may
assume that $\gamma\in \widehat{\langle h \rangle}$. This implies
that $K\leq \overline{H}$, and since by Corollary 3.1.6 (b)  \cite{RZ-10} $H$ is closed (i.e. $H= \overline{H}\cap G$) we
have $K\leq H$.

Assuming in addition that $\overline K^{\gamma}=\overline H$ and
$K$ is closed we have $H=\overline H\cap G=\overline K \cap G= K$
that shows the last statement of the proposition.
\end{proof}

\begin{cor}\label{virtually} Let $G$  be a hereditarily conjugacy separable group  and $H$
a finitely generated subgroup of $G$.  Let $U$ be a finite index
subgroup of $G$ such that $H\cap U$ is a  retract of $U$. Let $K$
be a subgroup of $G$ having an element  $h$ such that $C_G(h^n)$
is virtually cyclic for every natural $n$.  Then $\overline K$ is conjugate
into $\overline H$ in $\widehat G$ if and only if $K$ is conjugate
into $H$ in $G$.
 Moreover, if $K$ is closed then $\overline K$ is conjugate to
$\overline H$ in $\widehat G$ if and only if $K$ is conjugate  to
$H$ in $G$. \end{cor}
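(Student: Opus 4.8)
Since one implication is immediate (conjugating $K$ into $H$ in $G$ yields, on taking closures, a $\widehat G$-conjugate of $\overline K$ inside $\overline H$), the plan is to prove the forward direction: assuming $\overline K^{\gamma}\le\overline H$ for some $\gamma\in\widehat G$, deduce that $K$ is conjugate into $H$ in $G$. The obstacle compared with Proposition \ref{general} is that the retract available is $H_0:=H\cap U$ rather than $H$ itself, so neither Lemma \ref{achieve} nor Proposition \ref{general} can be quoted for $H$ directly; intersecting with $U$ would only control $K\cap U$, which is weaker than what is wanted. The device that circumvents this is to run the centralizer argument on a single power of $h$ lying in $U$, while keeping the containment statement at the level of the full $\overline K$ and $\overline H$.

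First I would reduce the conjugator into $\overline U$. Since $U$ is of finite index, $\overline U=\widehat U$ is open in $\widehat G$ and $\widehat G=G\,\overline U$; writing $\gamma=g\delta$ with $g\in G$ and $\delta\in\overline U$ and replacing $K$ by $K^{g}$ (and $h$ by $h^{g}$), I may assume $\gamma\in\overline U$. Next choose $m$ with $k:=h^{m}\in U$; then $k\in K\cap U$ and the centralizer $C_G(k)=C_G(h^{m})$ is virtually cyclic. This is exactly where the hypothesis ``$C_G(h^{n})$ virtually cyclic for every $n$'' is used: the power needed to land in $U$ depends on the initial conjugation, and the condition is conjugation invariant, so one wants it for all exponents at once. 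Using that finite-index subgroups are closed ($\overline U\cap G=U$) one checks $\overline{H_0}=\overline H\cap\overline U$, and since $k,\gamma\in\overline U$ one gets $k^{\gamma}\in\overline H\cap\overline U=\overline{H_0}$.

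Now I would reproduce the mechanism of Lemma \ref{achieve} for the element $k$ alone. Being a retract of $U$, the subgroup $H_0$ is conjugacy distinguished in $U$ by Proposition 7 of \cite{RZ-15} and closed by Corollary 3.1.6(b) of \cite{RZ-10}; hence after conjugating $K$ by a suitable element of $U$ I may assume $k\in H_0$. Let $\widehat f:\overline U\to\overline{H_0}$ be the continuous extension of the retraction. From $k\in H_0$ and $k^{\gamma}\in\overline{H_0}$ one obtains $k^{\gamma}=\widehat f(\gamma)^{-1}k\,\widehat f(\gamma)$, so $c:=\gamma\,\widehat f(\gamma)^{-1}\in C_{\overline U}(k)$; and since $\widehat f(\gamma)\in\overline{H_0}\le\overline H$, the containment $\overline K^{\gamma}\le\overline H$ passes to $\overline K^{c}\le\overline H$. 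Because $C_G(k)$ is virtually cyclic, $\langle k\rangle$ has finite index in $C_U(k)$, and the hereditary conjugacy separability of $U$ gives, via Proposition 3.2 of \cite{M}, that $C_U(k)$ is dense in $C_{\overline U}(k)$, whence $C_{\overline U}(k)=\overline{\langle k\rangle}\,C_U(k)$. Writing $c=v\eta$ with $v\in C_U(k)\subseteq G$ and $\eta\in\overline{\langle k\rangle}$, and replacing $K$ by $K^{v}$ (which fixes $k$), I reach $\overline K^{\eta}\le\overline H$ with $\eta\in\overline{\langle k\rangle}$.

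Finally, since $k\in H_0\le H$ we have $\overline{\langle k\rangle}\le\overline H$, so $\eta\in\overline H$ and therefore $\overline K\le\overline H$. To descend to $G$, observe that the retract $H_0$ is closed and of finite index in $H$, so a short coset computation ($\overline H=\bigcup_i h_i\overline{H_0}$ over coset representatives) shows $H$ itself is closed, i.e.\ $\overline H\cap G=H$; hence $K\le\overline K\cap G\le H$. As only $K$ was conjugated throughout, the original $K$ is conjugate into $H$ in $G$. For the last statement one runs the same reductions keeping equalities in place to arrive at $\overline K=\overline H$; then, with $K$ closed, $K=\overline K\cap G=\overline H\cap G=H$, so $K$ is conjugate to $H$. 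I expect the genuine difficulty to be the obstacle noted at the outset—relating the statement about $H$ to the retract $H_0=H\cap U$ without losing the part of $K$ lying outside $U$—which is resolved by carrying out the centralizer/retraction argument at the level of the single anchor $k$ while propagating the inclusion of the whole of $\overline K$; the supporting facts $\overline{H_0}=\overline H\cap\overline U$ and the closedness of $H$ are routine once $H_0$ is known to be closed.
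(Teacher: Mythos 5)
Your proof is correct and follows essentially the same route as the paper's: anchor on a power $h^m\in U$ with virtually cyclic centralizer, run the retraction/centralizer mechanism of Lemma \ref{achieve} for $H_0=H\cap U$ while keeping the containment at the level of $\overline K\le \overline H$, and finish using that $H$ is closed because its finite-index retract $H\cap U$ is. The only cosmetic difference is that the paper first invokes Proposition \ref{general} to arrange $K\cap U\le H\cap U$ (so that $h^n\in H$) and then cites Lemma \ref{achieve}, whereas you obtain $k\in H_0$ directly from the conjugacy-distinguished step inside your unpacked version of that lemma.
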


\begin{proof} Suppose $\overline K^{\gamma}\leq \overline H$, where
$\gamma\in \widehat{G}$.   By Proposition \ref{general}
$K\cap U$ is conjugate into $H\cap U$ in $U$ so we may assume that
$K\cap U \leq H\cap U$. Choose natural $n$ such that $h^n\in K\cap
U$.  By Lemma \ref{achieve} we may
assume that $\gamma\in \widehat{\langle h^n \rangle}$. This
implies that $K\leq \overline{H}$, and since  $H= \overline{H}\cap
G$  (indeed, $H\cap U$ is a retract of $U$,  hence  is closed and so $H$ is
closed), we have $K\leq H$.

Assuming in addition that $\overline K^{\gamma}=\overline H$ and
$K$ is closed we have $H=\overline H\cap G=\overline K \cap G= K$
that shows the last statement of the corollary.\end{proof}

 To apply Proposition \ref{general} to limit groups  we shall need
 the following easy

\begin{lem}\label{lem} Let $G$ be a finitely generated  non-abelian limit
group. Then $G$ has an element whose  centralizer is cyclic.
 \end{lem}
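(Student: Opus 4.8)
The plan is to exploit two standard structural facts about limit groups: that they are torsion-free, and that they are toral relatively hyperbolic, i.e.\ hyperbolic relative to the family $\mathcal{P}$ of their maximal non-cyclic abelian subgroups (this relative hyperbolicity of limit groups is due to Dahmani and to Alibegovic, building on Sela's work). In this setup a \emph{loxodromic} element --- one not conjugate into any peripheral subgroup --- has virtually cyclic centralizer, and torsion-freeness promotes ``virtually cyclic'' to ``infinite cyclic''. Since an element is loxodromic precisely when the maximal abelian subgroup containing it has rank one, producing a single loxodromic element is exactly what the lemma asks for, so the whole task reduces to that.

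First I would record that $G$ is non-elementary as a relatively hyperbolic group. As $G$ is torsion-free, every virtually cyclic subgroup of $G$ is in fact infinite cyclic, hence abelian; since $G$ is assumed non-abelian, it is therefore neither finite nor virtually cyclic. It is also distinct from any of its peripheral subgroups, since the latter are abelian in the toral structure. Thus $G$ is genuinely non-elementary relative to $\mathcal{P}$.

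Next, in a non-elementary relatively hyperbolic group loxodromic elements always exist (in fact infinitely many pairwise non-commensurable ones); fix such a $g$. The centralizer $C_G(g)$ is contained in the unique maximal elementary subgroup $E(g)$ associated with $g$, which is virtually cyclic, so $C_G(g)$ is virtually cyclic. Since it contains $\langle g\rangle\cong\mathbb{Z}$ it is infinite, and a torsion-free infinite virtually cyclic group is infinite cyclic. Hence $C_G(g)$ is cyclic and $g$ is the required element.

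The main obstacle is not the argument but correctly importing the two structural inputs: (i) that a limit group admits a toral relatively hyperbolic structure with abelian peripheral subgroups, and (ii) that centralizers of loxodromic elements are virtually cyclic. Both are well documented, but they carry the technical weight of the proof. If one wished to stay closer to first principles, an alternative route uses that limit groups are CSA, so that the centralizer of any nontrivial element is the unique maximal abelian subgroup containing it, and that all abelian subgroups are finitely generated free abelian; one then argues through the cyclic (abelian) JSJ hierarchy of $G$ that some maximal abelian subgroup has rank one --- for instance by exhibiting an element acting hyperbolically on the Bass--Serre tree of a principal cyclic splitting, whose centralizer must preserve the axis and is therefore forced to be cyclic. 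I would present the relatively hyperbolic version as primary, since it is the shortest.
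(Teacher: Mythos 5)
Your argument is correct, but it follows a genuinely different route from the paper. The paper works with the Kharlampovich--Myasnikov embedding of $G$ into an iterated extension of centralizers $G_n=G_{n-1}*_C A$ of a free group: using commutative transitivity and Bass--Serre theory for this amalgam, it shows that any non-cyclic centralizer in $G_n$ must meet a conjugate of the edge group $C$ and hence be conjugate to the abelian vertex group $A$; it then takes two non-commuting elements $a,b\in G$, reduces to the case where both lie in distinct conjugates of $A$, and reads off from the normal form of $ab$ in the amalgam that $ab$ is not conjugate into $A$, so its centralizer is cyclic. Your proof instead imports the toral relative hyperbolicity of limit groups (Dahmani, Alibegovic) and the existence of loxodromic elements in non-elementary relatively hyperbolic groups, together with the fact that such elements have virtually cyclic centralizers; torsion-freeness then upgrades this to cyclic. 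The logic is sound at every step (non-abelian plus torsion-free does rule out the elementary cases, and the peripheral subgroups are abelian, so $G$ is non-elementary), and it also quietly covers the degenerate case where the peripheral family is empty and $G$ is hyperbolic. What you gain is brevity and a cleaner conceptual statement (loxodromic $\Leftrightarrow$ cyclic maximal abelian subgroup, using CSA); what you pay is reliance on the relative hyperbolicity theorem, which is a heavier structural input than the ICE embedding plus elementary normal-form considerations used in the paper. Your alternative sketch via CSA and a cyclic splitting is in fact much closer in spirit to the paper's Bass--Serre argument.
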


 \begin{proof} Let $G_n=G_{n-1}*_C A$ be n-th  extension of
 centralizers ($A$ is free abelian of rank $m$) such that $G\leq G_n$.   Let $a,b\in G$ be non-commuting elements of $G$.  Since $G_n$ is commutative transitive,  the centralizer of any
 element of $G_n$ is free abelian and if it is non-cyclic it  must intersect a conjugate of $C$ by  Theorem 14 \cite{Serre}  and so must be 
 conjugate to $A$.  We need to find an element  in $G$ not conjugate to an element of $A$.  Therefore we may assume that $a\in A^g, b\in A^h$, $A^g\neq A^h$ for some $g,h\in G_n$ and in fact conjugating $G$ by $g^{-1}$ we may assume that $a\in A$. It follows then from the canonical normal form of $ab$ in $G_n$ that it can not be conjugate to an element  of $A$ in $G_n$.\end{proof}

\begin{thm}
 Let $G$ be a  limit group. Then $G$ is  subgroup conjugacy
 separable. Moreover, every finitely generated subgroup of $G$ is  into conjugacy
 separable.
\end{thm}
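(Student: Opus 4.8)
The plan is to deduce the theorem from the machinery already developed, principally Corollary~\ref{virtually} and Lemma~\ref{lem}, by verifying that limit groups satisfy the hypotheses needed to apply them. First I would reduce to the case where $G$ is finitely generated and non-abelian: if $G$ is abelian the conjugacy problem is trivial (conjugacy reduces to equality of subgroups, and limit groups are finitely generated and residually finite, hence subgroup separable in the abelian case), so there is nothing to prove; and a subgroup conjugacy separability statement about $G$ only concerns finitely generated subgroups, so finite generation of $G$ is the relevant case. I would then recall the two structural facts about limit groups that drive everything: they are \emph{hereditarily conjugacy separable} (every finite index subgroup of a limit group is again a limit group, and limit groups are conjugacy separable), and every finitely generated subgroup $H$ of a limit group is a \emph{virtual retract} of $G$. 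The virtual retract property is the crucial input, since it is exactly what is needed to invoke the retraction argument in the lemma.

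Next I would set up the application of Corollary~\ref{virtually}. Given a finitely generated subgroup $H$ of $G$, the virtual retract property provides a finite index subgroup $U\leq G$ with $H\cap U$ a retract of $U$. For an arbitrary subgroup $K$ of $G$, I need to produce an element $h$ of $K$ whose powers all have virtually cyclic centralizer in $G$; this is precisely where Lemma~\ref{lem} enters. By that lemma $G$ contains an element with cyclic centralizer, and since $G$ is commutative transitive (being a limit group), the centralizer of any nontrivial power of such an element is again cyclic, hence virtually cyclic. The point to be careful about is that the element $h$ must lie in $K$, not merely in $G$; so I would argue that after replacing $K$ by a conjugate (which does not affect whether $\overline K$ is conjugate into $\overline H$) one may arrange that $K$ contains a suitable power of an element with cyclic centralizer, or alternatively handle the degenerate cases where $K$ itself is cyclic or trivial separately, since those are easy directly.

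With those hypotheses in place, Corollary~\ref{virtually} yields that $\overline K$ is conjugate into $\overline H$ in $\widehat G$ if and only if $K$ is conjugate into $H$ in $G$, which is exactly the statement that $H$ is into conjugacy distinguished (in the modified sense of the introduction). Taking $K$ to range over finitely generated subgroups and using the ``moreover'' clause of the corollary with $K$ closed gives the full subgroup conjugacy separability: two finitely generated subgroups $H,K$ are conjugate in $G$ if and only if their closures are conjugate in $\widehat G$, and the latter is detected by some finite quotient. The translation between the profinite statement and the existence of a separating finite quotient is the standard correspondence between the profinite topology on $G$ and finite quotients, so I would state it and cite the relevant background rather than reprove it.

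I expect the main obstacle to be the bookkeeping around the centralizer hypothesis of Corollary~\ref{virtually}: ensuring that the distinguished element $h$ can be taken \emph{inside} $K$ (so that its presence survives the reduction to $K\cap U$ and the conjugation adjustments), and confirming that commutative transitivity of limit groups genuinely forces $C_G(h^n)$ to be virtually cyclic for \emph{all} $n$, not just $n=1$. A secondary subtlety is the treatment of subgroups $K$ that are abelian (in particular cyclic), for which no element with cyclic centralizer need exist inside $K$; these cases likely require a direct argument using separability of cyclic subgroups and the conjugacy distinguishedness of $H$ established via Proposition~7 of \cite{RZ-15}, rather than the full strength of the corollary.
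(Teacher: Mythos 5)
Your overall skeleton --- hereditary conjugacy separability of limit groups, Wilton's theorem that finitely generated subgroups are virtual retracts, and then the centralizer machinery of Proposition \ref{general} / Corollary \ref{virtually} --- is exactly the paper's, but the two points you flag as ``obstacles'' at the end are genuine gaps, and your proposed workarounds do not close them. First, for non-abelian $K$ you propose to apply Lemma \ref{lem} to $G$ and then conjugate $K$ so that it ``contains a suitable power of an element with cyclic centralizer''; there is no reason any conjugate of $K$ should meet the cyclic subgroup generated by that particular element, so this step fails. The correct move, and the one the paper makes, is to apply Lemma \ref{lem} to $K$ itself: a finitely generated non-abelian subgroup of a limit group is again a limit group, so the lemma produces an element $k\in K$ whose centralizer is cyclic, and Proposition \ref{general} then applies directly with $h=k$ (no passage to $K\cap U$ or to powers is needed, since $H$ itself is already a virtual retract by Theorem B of \cite{W-08}).

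Second, the abelian case is not degenerate and is not confined to cyclic $K$: limit groups contain free abelian subgroups of arbitrary finite rank, and for such $K$ no element of $K$ has virtually cyclic centralizer in $G$ (that centralizer contains $K$), so neither Proposition \ref{general} nor Corollary \ref{virtually} is applicable, and ``separability of cyclic subgroups'' does not address it. The paper handles this case by a separate argument: pick $1\neq k\in K$, use Lemma \ref{achieve} to arrange that the conjugator $\gamma$ lies in $C_{\widehat G}(k)$, and then invoke commutative transitivity (centralizers of elements of a limit group in $\widehat G$ are abelian) to conclude that $\gamma$ centralizes all of $K\leq C_G(k)$, whence $K^\gamma=K\leq\overline H\cap G=H$ because $H$ is closed. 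Without this argument, or some substitute for it, your proof does not cover non-cyclic abelian $K$, and the ``into conjugacy separable'' clause fails for exactly those subgroups.
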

\begin{proof} Note first that $G$ is hereditarily conjugacy separable (see
Proposition 3.8 in \cite{CZ-07}). Let $H$ be a finitely generated subgroup of $G$.
By Theorem B \cite{W-08} $H$ is a virtual retract of $G$. Let $K$
be a finitely generated subgroup such that
 $\overline K^{\gamma}\leq \overline H$ for some
 $\gamma\in \widehat{G}$.

We distinguish two cases.

\medskip\noindent 1. $K$ is not abelian.  Then by  Lemma \ref{lem} there exists an element $k\in K$ whose centralizer is cyclic and the result follows from  Proposition \ref{general}.

\medskip\noindent 2. $K$ is abelian. Let $k\neq 1$ be an element of
$K$. By Lemma \ref{achieve} we may assume that $K^{\gamma}\in \overline H$ for some $\gamma\in C_{\overline G}(k)$ and since  $K\leq C_G(k)\leq C_{\widehat
G}(k)$ by commutative transitivity property  we have $K^\gamma=K$ and so  $K\leq \overline H \cap G=H$ since $H$ is closed in $G$. If $\overline K^\gamma=\overline H$ then the last formula gives the equality $K=H$.
\end{proof}

Next we apply Corollary \ref{virtually}  to important groups  of
geometric nature.

\begin{thm} \label{virtualretractsection} Let $G$ be a heredetarily conjugacy separable hyperbolic  group and let $H$
be a virtual retract of $G$. Then $H$ is into conjugacy
distinguished. In particular $G$ is a virtual retract subgroup  conjugacy separable. \end{thm}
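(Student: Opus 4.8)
The plan is to reduce Theorem \ref{virtualretractsection} to Corollary \ref{virtually}, since the statement of the theorem is exactly an into-conjugacy-distinguished claim for a virtual retract $H$ of $G$, and $G$ is assumed hereditarily conjugacy separable. First I would recall what must be proved: given a subgroup $K$ of $G$ with $\overline K$ conjugate into $\overline H$ in $\widehat G$, I must show $K$ is conjugate into $H$ in $G$ (the converse being trivial). So the task is to put myself in a position to invoke Corollary \ref{virtually}, whose hypotheses require an element $h\in K$ with $C_G(h^n)$ virtually cyclic for every natural $n$, together with a finite index subgroup $U$ of $G$ with $H\cap U$ a retract of $U$.

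The second hypothesis is immediate from the definition of virtual retract: since $H$ is a virtual retract of $G$, there is a finite index subgroup $U$ of $G$ of which $H$ is a retract, and then $H\cap U=H$ (after possibly intersecting, $H\le U$) is a retract of $U$, so the structural requirement of Corollary \ref{virtually} is satisfied. The real content is producing the element $h$ with the centralizer condition. Here I would use the hyperbolicity of $G$: in a hyperbolic group, the centralizer of any infinite-order element is virtually cyclic, and more to the point $C_G(h^n)$ is virtually cyclic for every $n$ whenever $h$ has infinite order, because powers of an infinite-order element still generate infinite cyclic subgroups whose centralizers are controlled by the (unique, up to finite index) maximal virtually cyclic subgroup containing them. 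So it suffices to find an infinite-order element $h\in K$.

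The main obstacle, and the case that needs genuine care, is therefore when $K$ has no element of infinite order, i.e. when $K$ is (possibly virtually) a torsion group. Since $G$ is hyperbolic, any torsion subgroup is finite, so either $K$ contains an infinite-order element and the argument above applies directly, or $K$ is finite. If $K$ is finite, then $\overline K$ is a finite subgroup of $\widehat G$, and the statement $\overline K^\gamma\le\overline H$ must be descended to $G$ by a separate, elementary argument: a finite subgroup of a hyperbolic group that is conjugate into $\overline H$ profinitely is conjugate into $H$ already in $G$, using conjugacy separability of $G$ applied to the finitely many conjugacy classes of elements of $K$ together with the fact that $H$ is closed in the profinite topology on $G$. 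This handles the torsion-free-failure case and is where I would expect to spend the most effort making the descent precise.

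Once both cases are in place, the theorem follows: for $K$ containing an infinite-order element I apply Corollary \ref{virtually} with $h$ that element and $U$ the retracting subgroup to conclude $K$ is conjugate into $H$ in $G$; for $K$ finite I use the separability descent. This proves that $H$ is into conjugacy distinguished. The final sentence of the theorem, that $G$ is virtual retract subgroup conjugacy separable, then follows by applying the into-conjugacy-distinguished property to a pair of non-conjugate virtual retracts and using that $\widehat G$-nonconjugacy is detected in a finite quotient, exactly as in the limit group case.
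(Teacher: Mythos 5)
Your main line of argument is exactly the paper's: the entire published proof is the single observation that centralizers of infinite-order elements of a hyperbolic group are virtually cyclic (Proposition 3.5 of the cited ICTP notes), after which Corollary \ref{virtually} applies verbatim. Your reduction, your choice of $U$ and of the element $h$, and your remark that $C_G(h^n)$ stays virtually cyclic for all $n$ because $h^n$ still has infinite order, all coincide with what the authors intend. Your dichotomy is also correctly grounded: an infinite subgroup of a hyperbolic group contains an element of infinite order, so the only remaining case is $K$ finite.

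Where you go beyond the paper is precisely that finite case, which the published proof silently ignores (it is vacuous for torsion-free $G$, but the theorem is stated for arbitrary hyperbolic $G$, and the authors elsewhere advertise that their methods avoid the torsion-freeness assumption). Identifying this gap is to your credit, but your proposed descent does not yet close it. Knowing that each element of a finite subgroup $K$ is conjugate into $H$ in $G$ (which does follow from $H$ being conjugacy distinguished, Proposition 7 of \cite{RZ-15}) does not imply that $K$ itself is conjugate into $H$ as a subgroup, even combined with $H$ being closed; elementwise conjugacy and subgroup conjugacy diverge already for non-cyclic finite groups. For cyclic $K$ your argument works, but for general finite $K$ you would need a genuinely different input, e.g.\ that finite subgroups of $G$ are conjugacy distinguished as subgroups (which one might try to extract from the finiteness of the number of conjugacy classes of finite subgroups of a hyperbolic group together with hereditary conjugacy separability, in the spirit of Lemma \ref{achieve}), and that is not supplied by anything quoted in the paper. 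So: for the case the authors actually treat, your proof is theirs; for the case you correctly flag as missing, your sketch is not yet a proof.
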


\begin{proof}   Observing  that the centralizers
of elements of infinite order of $G$ are virtually cyclic (Proposition 3.5 \cite{Mih}) one deduces the
result from Corollary \ref{virtually}.\end{proof}

\noindent
{\bf Proof of Theorem \ref{quasiconvex}.}  By Theorem 1.1 in  \cite{MZ-15} $G$ is hereditarily conjugacy separable and by \cite{HW-2008} quasiconvex subgroups of $G$ are virtual retracts.  So the result follows from Theorem \ref{virtualretractsection}.

\begin{thm} Let $R=\langle a_1,...,a_n\mid W^n\rangle$ be a one relator group
with $n>|W|$. Then every finitely generated subgroup $H$ of $R$ is
into conjugacy distinguished and $R$ is subgroup conjugacy
separable.\end{thm}

\begin{proof} By   Theorem 1.4 in
\cite{W} $R$ is hyperbolic virtually special and so every  quasiconvex subgroup of it is a virtual retract  by Proposition 4.3 in
\cite{B}.  On the other hand by Theorem 1.2 in \cite{HW}  every finitely generated subgroup of $R$ is quasiconvex. Thus one deduces the result from the previous
theorem.\end{proof}


\begin{rem} Let $\C$ be a class of finite groups closed for
subgroups, quotients and extensions.  One can define then subgroup
$\C$-conjugacy separability and prove the pro-$\C$ version of 
Proposition \ref{general}  using Proposition 7 in \cite{RZ-15} and
Theorem 4.2 in \cite{F} instead of Proposition 3.2 in
\cite{M}.\end{rem}

\section{Direct product }

We show here that a direct product of free groups of rank 2 is not
subgroup conjugacy separable. It
 is based on an idea of Michailova combined with observations of
 V. Metaftsis  and E. Raptis \cite{MR-08}.

Consider a finitely presented  group $H$ given by G. Higman  \cite{H-51}
$$H =
\langle x_1, x_2, x_3, x_4 \,|\, r_1, r_2, r_3, r_4\rangle,$$
where $r_1= x_2^{-1}x_1x_2x_1^{-1}, r_2= x_3^{-1}x_2x_3x_2^{-1},
r_3= x_4^{-1}x_3x_4x_3^{-1}, r_4= x_1^{-1}x_4x_1x_4^{-1}$ and let
$F_4$ be the free group on four generators $x_1, x_2, x_3, x_4$.

Clearly, $F_4 \times F_4'$  can be considered as a finite
index subgroup of $F_2\times F_2'$, where $F_2$ and $'F_4'$ are   isomorphic
copies of $F_2$ and $F'_4$ respectively.  Since the induced profinite topology on a finite
index subgroup is the full profinite topology,  a  subgroup of $F_4\times F_4'$ is closed in the
profinite topology of $F_4\times F_4$ if and only if it is closed
in the profinite topology of $F_2\times F_2'$.

Let $L_H$ be the subgroup of $F_4\times F_4$ generated by
$$L_H = \langle (x_i, x_i), (1, r_i), \,\, i = 1,2,3,4\rangle.$$

Then $L_H\cap (F_4\times \{1\})$ is the normal closure of $\langle r_1, r_2, r_3,
r_4\rangle$ in $(F_4\times \{1\})$. So, by Proposition 1 in \cite{MR-08} $L_H$
is closed in the profinite topology of $F_4\times F_4'$, if and
only if $L_H\cap (F_4\times \{1\})$ is closed in the profinite topology of $(F_4\times \{1\})$
or equivalently if and only if the group $H = \langle x_1, x_2,
x_3, x_4 \,|\, r_1, r_2, r_3, r_4\rangle$ is residually finite.
However, G. Higman in \cite{H-51} proved that $H$ possesses no
proper  normal subgroups of finite index, so $L_H$ is not closed
in the profinite topology of $F_4\times F_4'$.

Consider the closure $\overline{L_H}$ of $L_H$ in $F_4\times F_4'$.
Then $\overline{L_H}\cap (F_4\times \{1\})$ is a normal subgroup of
$F_4\times \{1\}$, and $(F_4\times \{1\})/(\overline{L_H}\cap (F_4\times \{1\}))$ is residually finite. On the other hand, $(F_4\times \{1\})/(\overline{L_H}\cap (F_4\times \{1\}))$ is a quotient of $H$, and
$H$ does not have any finite index normal subgroup, so $F_4\times \{1\}=\overline{L_H}\cap (F_4\times \{1\})$, consequently
$\overline{L_H}=F_4\times F_4'$. It means that $L_H$ is a dense
subgroup of $F_4\times F_4'$.

Now, we show that $F_4\times F_4'$ is not subgroup conjugacy
separable. Indeed, since $L_H$ is dense in $F_4\times F_4'$, its
image in each finite quotient coincides with the whole quotient.
If $L_H$ is not isomorphic to $F_4\times F_4'$, then is not
conjugate. Thus it suffices to prove that $L_H\not\cong F_4\times F_4'$.

The subgroup $L_H\cap (F_4\times \{1\})$ is an infinitely  generated
subgroup of $F_4\times F_4'$ (since it is normal of infinite index), and it is clear that the centralizer
of every element in $F_4\times F_4'$ is finitely generated. Let $(e,1)$ be a nontrivial element of $L_H\cap (F_4\times \{1\})$. 
Then
$$C_{L_H}(e, 1)= C_{L_H\cap (F_4\times \{1\})}(e)\times (L_H\cap (F_4\times \{1\})),$$
so  the right factor of  the centralizer is infinitely generated and hence $C_{L_H}(e, 1)$ is infinitely generated as well.
Therefore $L_H$ and $\overline{L_H}=F_4\times F_4'$ are not isomorphic subgroups.

\bigskip
{\it Author's Adresses:}

\medskip
Sheila C. Chagas\\
Departamento de Matem\'atica,\\
~Universidade de Bras\'\i lia,\\
70910-900 Bras\'\i lia DF,\\
Brazil

sheila@mat.unb.br

\medskip
Pavel A. Zalesski\\
Departamento de Matem\'atica,\\
~Universidade de Bras\'\i lia,\\
70910-900 Bras\'\i lia DF\\
Brazil

pz@mat.unb.br

\end{document}